\documentclass{article}
\textheight 195mm
\textwidth 125mm
\usepackage{amsfonts}
\usepackage{amsmath}
\usepackage{amssymb}
\usepackage{theorem}
\usepackage{graphicx}
\usepackage{epsfig}

\newtheorem{theorem}{Theorem}
\newtheorem{proposition}[theorem]{Proposition}
\newtheorem{lemma}[theorem]{Lemma}

\theorembodyfont{\rmfamily}

\newtheorem{remark}[theorem]{Remark}
\newcommand{\CAT}{{\mathrm{CAT}}}
\newcommand{\semi}{{:}}

\newcommand{\cc}{{\mathbf C}}
\newcommand{\zz}{{\mathbb Z}}
\newcommand{\redX}{{\overline X}}

\newcommand{\fp}{{\mathrm FP}}

\title{Every finite complex has the homology of some $\CAT(0)$ cubical duality group.\thanks{2010 \emph{AMS Subject Classification} 55P20, 20F67 (57P10, 20J05)}}

\author{Raeyong Kim}

\date{\today}

\newenvironment{proof}[1][]{\begin{trivlist} \item[\hskip\labelsep
\emph{Proof#1.}]}{\foorp \end{trivlist}}
\newcommand{\foorp}{{\unskip\nobreak\hfil\penalty50
 \hskip1em\vadjust{}\nobreak\hfil \vrule height3pt width3pt depth0pt
 \parfillskip=0pt \finalhyphendemerits=0 \par}}

\begin{document} 

\maketitle

\begin{abstract} 
We prove that every finite connected simplicial complex has the homology of the classifying space for some $\mathrm{CAT}(0)$ cubical duality group. More specifically, for any finite simplicial complex $X$, we construct a locally $\CAT(0)$ cubical complex $T_{X}$ and an acyclic map $t_{X} : T_{X} \to X$ such that $\pi_{1}(T_{X})$ is a duality group.
\end{abstract} 


\section{Introduction} 

In \cite{KanThu}, Kan and Thurston proved that every simplicial complex $X$ has the homology of some aspherical space $T_{X}$. Equivalently, if $X$ is connected, then $X$ has the homology of the classifying space for some group $G$. The {K}an-{T}hurston theorem has been extended and generalized by a number of authors in many different ways. Baumslag, Dyer and Heller improved this result in \cite{BauDyeHel} that $T_{X}$ can be chosen to be a finite simplicial complex if $X$ is finite. Hausmann proved in \cite{Hau} that one can take $G$ to be a duality group if  $X$ is finite. Leary gave a new approach to the theorem by using metric geometry, namely, the classifying space for $G$ can be realized as a locally $\CAT(0)$ cubical complex.(See \cite{LeaMKT}.) The author also refers to \cite{McD}, \cite{Mau}, \cite{LeaNuc} and \cite{Kim} for more extensions and generalizations. In this paper, we prove the combination of Hausmann and Leary. In order to state the main theorem, we briefly recall the following definitions and introduce basic tools, which will be used in our construction.

A group $G$ is said to be of \emph{type $\fp$} if $\zz$, as a trivial $\zz G$-module, has a finite projective resolution over $\zz G$. A group $G$ of type $\fp$ is called a \emph{duality group of dimension $n$} if, for any abelian group $A$, $H^{i}(G \semi \zz G \otimes A) = 0$ for $i \neq n$. See \cite{Bro} and \cite{Bie} for equivalent formulations and examples. The following proposition says that duality groups behave well under amalgamations and HNN-extensions. It can be verified by the Mayer-Vietoris sequence. 

\begin{proposition}{\cite[Proposition 9.15]{Bie}}\label{dualamal}
Let $G$ be an amalgamated product $G=G_{1}*_{H}G_{2}$ or a HNN-extension $G=G_{1}*{H,\sigma}$. Assume that $G_1$ and $G_2$ are duality groups of dimension $n$ and that $H$ is a duality group of dimension $n-1$. Then, in either case, $G$ is a duality group of dimension $n$.
\end{proposition}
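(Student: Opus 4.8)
The plan is to verify directly the two conditions in the definition of an $n$-dimensional duality group for $G$: first that $G$ is of type $\fp$, and second that $H^{i}(G \semi \zz G \otimes A) = 0$ for every abelian group $A$ and every $i \neq n$. Both facts come from the long exact sequences attached to the action of $G$ on its Bass--Serre tree, combined with the elementary observation that $\zz G$ is a free module over each of $\zz G_{1}$, $\zz G_{2}$ and $\zz H$ (choose coset representatives). Throughout, recall that a group of type $\fp$ is of type $\fp_{\infty}$, so that its cohomology commutes with arbitrary direct sums of coefficient modules (compute with a resolution by finitely generated projectives).

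The first step is to see that $G$ is of type $\fp$. The augmented simplicial chain complex of the Bass--Serre tree gives a short exact sequence of $\zz G$-modules
\[
0 \to \zz G \otimes_{\zz H} \zz \to (\zz G \otimes_{\zz G_{1}} \zz) \oplus (\zz G \otimes_{\zz G_{2}} \zz) \to \zz \to 0
\]
in the amalgamated product case, and $0 \to \zz G \otimes_{\zz H} \zz \to \zz G \otimes_{\zz G_{1}} \zz \to \zz \to 0$ in the HNN case. Since $G_{1}$, $G_{2}$ and $H$ are of type $\fp$, each of $\zz$ over $\zz G_{j}$ and over $\zz H$ has a finite length resolution by finitely generated projectives; inducing these up along the (flat, since free) ring extensions $\zz G_{j} \hookrightarrow \zz G$ and $\zz H \hookrightarrow \zz G$ yields finite projective resolutions of the induced modules appearing above. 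Splicing them along the displayed short exact sequence (horseshoe lemma) produces a finite projective resolution of $\zz$ over $\zz G$, so $G$ is of type $\fp$.

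The second step is the Mayer--Vietoris computation. Fix an abelian group $A$, put $M = \zz G \otimes A$, and apply $\mathrm{Ext}^{*}_{\zz G}(-,M)$ to the short exact sequence above; using Shapiro's lemma ($\mathrm{Ext}^{i}_{\zz G}(\zz G \otimes_{\zz K} \zz, M) \cong H^{i}(K \semi M)$ for $K \in \{G_{1},G_{2},H\}$) this gives the long exact sequence
\[
\cdots \to H^{i}(G \semi M) \to H^{i}(G_{1} \semi M) \oplus H^{i}(G_{2} \semi M) \to H^{i}(H \semi M) \to H^{i+1}(G \semi M) \to \cdots
\]
(with the obvious single-term modification in the HNN case). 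Now, as a $\zz G_{j}$-module $\zz G$ is free, so $M \cong \bigoplus \, \zz G_{j} \otimes A$ over $\zz G_{j}$; since $G_{j}$ is $\fp_{\infty}$ we get $H^{i}(G_{j} \semi M) \cong \bigoplus H^{i}(G_{j} \semi \zz G_{j} \otimes A)$, which vanishes for $i \neq n$ because $G_{j}$ is an $n$-dimensional duality group. The same argument over $\zz H$ gives $H^{i}(H \semi M) = 0$ for $i \neq n-1$. Hence for $i \neq n$ the term $H^{i}(G_{1} \semi M) \oplus H^{i}(G_{2} \semi M)$ vanishes and so does the preceding term $H^{i-1}(H \semi M)$ (as $i-1 \neq n-1$), so exactness forces $H^{i}(G \semi M) = 0$. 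Thus $H^{i}(G \semi \zz G \otimes A) = 0$ for all $i \neq n$ and all $A$, and together with $G$ being of type $\fp$ this says $G$ is a duality group of dimension $n$. The one point I would take care over is precisely the interchange of cohomology with the (possibly infinite) direct sum decomposition of $\zz G$ over $\zz G_{j}$ and $\zz H$: this is where the $\fp$ hypotheses on $G_{1}$, $G_{2}$, $H$ are genuinely used; the rest is the standard Bass--Serre/Mayer--Vietoris formalism and a short diagram chase.
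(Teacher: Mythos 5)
Your argument is correct and is essentially the paper's: the paper simply cites Bieri's Proposition 9.15 and remarks that it follows from the Mayer--Vietoris sequence, which is exactly the Bass--Serre/Shapiro computation you spell out, together with the standard closure property giving type $\fp$. One small quibble: the step producing a finite projective resolution of $\zz$ over $\zz G$ from resolutions of the other two terms of the short exact sequence is a mapping-cone construction (or an appeal to Bieri's closure properties of type $\fp$), not the horseshoe lemma, which resolves the middle term rather than the quotient --- but the construction you intend clearly works.
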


A geodesic metric space is said to be $\CAT(0)$ if any geodesic triangle is at least as thin as the comparison triangle in the Euclidean plane having the same side lengths. A geodesic space is called locally $\CAT(0)$ (or $nonpositively\,\, curved$) if every point has a $\CAT(0)$ neighborhood. See \cite{BriHae} for further details. The following gluing lemmas will be used throughout our construction.

\begin{lemma}{\cite[Proposition II.11.6, II.11.13]{BriHae}}\label{gluinglemma}
Let $X_1$ and $X_2$ be metric spaces of non-positive curvature.
\begin{enumerate}
\item (Simple Gluing Lemma) Let $A_i \subset X_i$ for $i=1,2$ be closed subspaces that are locally convex and complete. If $j: A_{1} \to A_{2}$ is a bijective local isometry, then the quotient of the disjoint union of $X_1$ and $X_2$ by the equivalence relation generated by $[a_{1} \sim j(a_{1}), \forall a_{1} \in A_{1}]$ is non-positively curved.
\item (Gluing with a Tube) Let $A$ be a metric space of non-positive curvature. If $A$ is compact and $\phi_{i} : A \to X_{i}$ is a local isometry for $i=1,2$, then the quotient of $\displaystyle{X_{1} \coprod (A \times [0,1]) \coprod X_{2}}$ by the equivalence relation generated by $[(a,0) \sim \phi_{1}(a), (a,1) \sim \phi_{2}(a), \forall a \in A]$ is non-positively curved.
\end{enumerate}
\end{lemma}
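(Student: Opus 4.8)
The plan is to exploit the fact that non-positive curvature is a purely local condition: a complete geodesic space is locally $\CAT(0)$ precisely when every point has some $\CAT(0)$ neighbourhood, so in both parts it suffices to produce, around each point of the quotient, a ball that is $\CAT(0)$. Points lying in the interior of one of the pieces---in $X_i \setminus A_i$ for part (1), or in $X_i \setminus \phi_i(A)$ or the open tube $A \times (0,1)$ for part (2)---are unproblematic: a small enough ball about such a point is isometric to a ball in $X_i$ or in the product $A \times \mathbb{R}$, each of which is $\CAT(0)$ by hypothesis (using that a product of $\CAT(0)$ spaces is $\CAT(0)$, and that $\phi_i(A)$ is closed because $A$ is compact). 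So the whole issue is the local behaviour along the gluing locus, and the engine for that is Reshetnyak's gluing theorem: the gluing of two $\CAT(0)$ spaces along subspaces that are complete, convex and isometric is again $\CAT(0)$.

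For part (1), I would fix $a \in A_1$, set $a' = j(a) \in A_2$, and choose $\varepsilon > 0$ small enough that $B = B(a,\varepsilon) \subset X_1$ and $B' = B(a',\varepsilon) \subset X_2$ are convex and $\CAT(0)$, that $A_1 \cap B$ is convex in $B$ and $A_2 \cap B'$ is convex in $B'$ (possible since the $A_i$ are locally convex), and that $j$ restricts to an isometry $A_1 \cap B \to A_2 \cap B'$ (possible because $j$ is a local isometry, shrinking $\varepsilon$ further if needed). These intersections are complete, being closed subsets of complete spaces---this is exactly where the closedness and completeness hypotheses on the $A_i$ are used. The ball about $[a]$ in the quotient is then isometric to the gluing of $B$ and $B'$ along $A_1 \cap B \cong A_2 \cap B'$, so it is $\CAT(0)$ by Reshetnyak, and part (1) follows.

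For part (2), I would reduce to part (1) by a mapping-cylinder construction. The product $A \times [0,1]$ is non-positively curved and its boundary copies $A \times \{0\}$ and $A \times \{1\}$ are closed, complete and locally convex in it. One first shows that the mapping cylinder $M_i := X_i \cup_{\phi_i} (A \times [0,1])$, where $(a,1)$ is identified with $\phi_i(a)$, is non-positively curved and that $A \times \{0\}$ stays closed, complete and locally convex in $M_i$; granting this, part (1) applied to $M_1$ and $M_2$ glued along the identity of $A \times \{0\}$ yields precisely the space in the statement. Non-positive curvature of $M_i$ is again checked ball by ball, the only new case being a ball about an image point $\phi_i(a)$: here one passes to a local model in which $\phi_i$ is an isometric embedding of a small ball of $A$, glues the finitely many such balls (finitely many, since $A$ is compact and $\phi_i$ is locally injective) onto a ball of $X_i$, and invokes Reshetnyak once more.

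The step I expect to be the main obstacle is exactly this last one. Because the maps $\phi_i$ in part (2) are only local isometries rather than embeddings, the gluing locus $\phi_i(A)$ need not be locally convex in $X_i$, so Reshetnyak cannot be applied to $X_i$ and $A \times [0,1]$ directly. The fix is to perform the gluing ``upstairs'': either work in small charts where $\phi_i$ becomes an isometric embedding onto a convex piece, or pass to covers in which the local isometries are promoted to isometric embeddings of convex subspaces, establish the $\CAT(0)$ property there, and descend. Carrying out this local-to-global bookkeeping---in particular using compactness of $A$ to get a uniform lower bound on the admissible radii---is the technical heart of the argument; everything else reduces to Reshetnyak's theorem and the product formula.
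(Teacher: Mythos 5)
The paper offers no proof of this lemma: it is quoted from Bridson--Haefliger \cite{BriHae} (Propositions II.11.6 and II.11.13), so the only meaningful comparison is with the standard proof there, and your sketch is essentially that proof. Both parts come down to the locality of non-positive curvature plus Reshetnyak's gluing theorem, which is exactly how Bridson--Haefliger argue; your part (1) is their argument, and your part (2) differs only in packaging: you reduce to part (1) via the mapping cylinders $M_i = X_i\cup_{\phi_i}(A\times[0,1])$ and glue $M_1$ to $M_2$ along $A\times\{0\}$, whereas the source runs the same local Reshetnyak analysis directly on the tube (your tube then has length $2$ rather than $1$, which is harmless since the argument is insensitive to the tube's length). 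The one point your sketch leaves hanging is the one you flag yourself: attaching $A\times[0,1]$ to $X_i$ along a map that is only a local isometry. Your proposed detour through covers is unnecessary, and the worry that $\phi_i(A)$ need not be locally convex dissolves once you glue sheet by sheet: for $x\in X_i$ the preimage $\phi_i^{-1}(x)$ is finite (compactness of $A$ plus local injectivity), and for each preimage $a_j$ the image $\phi_i(B(a_j,\varepsilon))$ of a small convex ball is convex in $B(x,\varepsilon)$, because $\phi_i$ sends geodesics of $B(a_j,\varepsilon)$ to paths of the correct length, which must coincide with the unique geodesics of the $\CAT(0)$ ball $B(x,\varepsilon)$. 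So one applies Reshetnyak finitely many times, once per sheet, attaching $B(a_j,\varepsilon)\times[0,\varepsilon)$ along the convex set $\phi_i(B(a_j,\varepsilon))$; compactness of $A$ gives the uniform radius you ask for. With that observation your local-to-global bookkeeping closes and the sketch becomes a complete proof, in line with the cited one.
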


Our main theorem can be stated as follows.
\begin{theorem}\label{main}
For any finite simplicial complex $X$, there exists a finite locally $\CAT(0)$ cubical complex $T_{X}$ and a map $t_{X} : T_{X} \to X$ such that $\pi_{1}(T_{X})$ is a duality group and $t_{X}$ induces an isomorphism on homology for any local coefficients on $X$.
\end{theorem}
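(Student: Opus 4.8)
The plan is to re-run Leary's construction of the locally $\CAT(0)$ cubical Kan--Thurston complex, replacing the ``seed'' complexes out of which it is assembled by complexes whose fundamental groups are duality groups, and then to read off the duality property of $\pi_{1}(T_{X})$ from the graph-of-groups decomposition that the construction produces, by iterating Proposition~\ref{dualamal}. The key structural point is that Leary builds $T_{X}$ from finitely many finite locally $\CAT(0)$ cube complexes by a finite sequence of applications of the two parts of Lemma~\ref{gluinglemma}, always gluing along locally convex subcomplexes by local isometries; since a local isometry between complete nonpositively curved complexes is $\pi_{1}$-injective, van Kampen's theorem turns each such gluing into an amalgamated free product or an HNN-extension, so $\pi_{1}(T_{X})$ is exhibited as a finite iterated amalgam/HNN-extension of the fundamental groups of the seeds along the fundamental groups of the subcomplexes glued along. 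The map $t_{X}$, and the verification that it is a homology isomorphism for all local coefficients --- equivalently, that it is $\pi_{1}$-surjective with acyclic homotopy fibre --- are then exactly as in Leary: each ``cap'' is built so that its homology cancels, in the relevant Mayer--Vietoris sequence, that of the subcomplex it is attached to. Thus it remains to choose the seeds, together with the dimensions of all the subcomplexes that occur, so that every one of these amalgams and HNN-extensions satisfies the hypothesis of Proposition~\ref{dualamal}: an edge group that is a duality group of dimension exactly one below the two vertex groups.

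To supply such seeds I would fix an integer $N$ large relative to $\dim X$ and construct, for $k$ in a range determined by $N$ and $\dim X$, a finite acyclic locally $\CAT(0)$ cube complex $S_{k}$ with $\pi_{1}(S_{k})$ a duality group of dimension $k$, equipped with a nested family of locally convex subcomplexes realizing duality subgroups of dimensions $k, k-1, \dots$. The mechanism for the nested family --- local convexity together with duality subgroups of descending dimension --- is modelled on right-angled Artin groups: if $L$ is a Cohen--Macaulay flag complex then $A_{L}$ is a duality group of dimension $\dim L+1$ whose Salvetti complex is a finite locally $\CAT(0)$ classifying space, and a chain of Cohen--Macaulay full subcomplexes $K\subseteq L$ gives a chain of sub-Salvetti complexes that are locally convex, with $A_{K}$ a duality group of dimension $\dim K+1$. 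The problem of making such a complex acyclic while keeping it cubical and keeping $\pi_{1}$ a duality group --- which for $X$ a point is already the content of the theorem --- I would solve by combining this right-angled Artin picture with a homology-trivialization of the sort Leary uses to build his acyclic cube complexes, controlling the duality dimensions via the fact that duality groups are closed under direct products (with dimensions adding) as well as under the amalgams of Proposition~\ref{dualamal}; from the $S_{k}$ one then obtains the remaining auxiliary blocks the construction calls for --- for instance complexes with the integral homology of a sphere, used to fill the boundary of a simplex, whose fundamental group is a duality group of a prescribed dimension --- as products of an $S_{k}$ with a small torus or sphere. Substituting all of these for Leary's acyclic seeds and running his recursion, while choosing the dimension of each piece as it is glued on so that the Proposition~\ref{dualamal} pattern holds at every stage, one gets $\pi_{1}(T_{X})$ a duality group by iterating Proposition~\ref{dualamal}, and finiteness, the locally $\CAT(0)$ property and the homology statement are inherited verbatim from Leary's arguments.

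I expect the main obstacle to be exactly this last piece of bookkeeping. In Leary's construction the seeds are acyclic and otherwise arbitrary, so there is nothing to track; here every seed, every cap, and every subcomplex glued along must carry a duality group, and Proposition~\ref{dualamal} propagates duality only when the edge group lies one dimension below the vertex groups. Enforcing this everywhere demands a single coherent assignment of dimensions across a doubly recursive construction --- the recursion over the simplices of $X$, and, inside it, Leary's recursion building the block of an $n$-simplex from the blocks of its faces together with caps --- and the homological cancellations that make $t_{X}$ a homology isomorphism must be arranged compatibly with that assignment; this is the point at which Hausmann's (non-cubical) duality argument and Leary's cubical argument have to be genuinely merged rather than merely cited, and in particular where the acyclic cubical duality seeds of the previous paragraph have to be produced. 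A secondary difficulty is that there is no nontrivial acyclic duality group of dimension $1$, so one must verify that taking $N$ large relative to $\dim X$ really keeps the recursion clear of that range --- i.e.\ that the dimension counting stays bounded below throughout the construction.
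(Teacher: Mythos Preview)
Your high-level strategy is right---replace the building blocks by duality groups and propagate the property via Proposition~\ref{dualamal} and the gluing lemmas---but your plan diverges from the paper in a way worth flagging, and it leaves the main construction genuinely unresolved.

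The paper does \emph{not} re-run Leary's Kan--Thurston recursion with new seeds. It follows Hausmann's duality construction (the two-functor argument, \cite[Prop.~3.1]{Hau}) essentially verbatim and replaces a single ingredient: Hausmann's acyclic $2$-dimensional duality group $\Omega_{2}$ is swapped for a specific six-generator six-relator group $H$ whose presentation $2$-complex is a nonpositively curved square complex (this is the content of Section~\ref{twocomplexes}). The acyclic seeds $X_{n}$ that play the role of your $S_{k}$ are then built by hand in Proposition~\ref{acyclicgroups} from $H$ and a second one-relator group $K$, by an explicit nested amalgamation whose dimensions are tracked step by step; the inductive functorial construction over the simplices of $X$ (Theorem~\ref{main2}) is then literally Hausmann's, with CAT(0) verified at each amalgam via Lemma~\ref{gluinglemma}. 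So the paper's answer to your ``main obstacle'' paragraph is: don't attempt to thread duality bookkeeping through Leary's recursion; instead import Hausmann's recursion, which was designed from the start so that every amalgam has edge group one dimension below the vertex groups, and check only that its pieces can be realised cubically.

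The substantive gap in your proposal is the construction of the acyclic cubical seeds. Right-angled Artin groups on Cohen--Macaulay flag complexes give duality groups with nice locally convex chains of subgroups, but they are never acyclic (they have nonzero $H_{1}$), and ``combining this with a homology-trivialisation of the sort Leary uses'' is exactly the step that has to be made precise: Leary's trivialisations kill homology by iterated amalgams, and each such amalgam must still satisfy the dimension hypothesis of Proposition~\ref{dualamal}. The paper resolves this by abandoning the RAAG picture entirely and exhibiting concrete small groups $H$ and $K$ (Lemmas~\ref{grouph} and \ref{w}) for which both the CAT(0) square structure and the $2$-dimensional duality are verified directly, then bootstrapping to higher $n$ by explicit amalgams. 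Without an analogous concrete base case your scheme remains a plan rather than a proof.
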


If $X$ is connected, $T_{X}$ is the classifying space for the group $\pi_{1}(T_{X})$. Therefore, Theorem \ref{main} says that every finite connected simplicial complex has the same homology as the classifying space for some $\mathrm{CAT}(0)$ duality group. 

Our proof follows the inductive strategy of \cite{Hau} directly, but we substitute for one of the groups used there  with one defined in \cite{LeaMKT}. We also use the proposition and gluing lemmas stated above to stay within the class of duality groups and the class of locally $\CAT(0)$ spaces. More specifically, in section \ref{twocomplexes}, we introduce two duality groups of dimension 2 and corresponding 2-complexes, which can be realized as locally CAT(0) cubical complexes. Based on these two locally CAT(0) cubical complexes, in section \ref{seqgroups}, we construct a sequence of locally CAT(0) acyclic cubical complexes and show that their fundamental groups are all duality groups. In section \ref{mainproof}, we use the previous construction to prove our main theorem.\\

This paper is a part of the author's Ph.D. thesis. The author would like to thank thesis advisor Ian Leary for his guidance throughout this research project. The author also would like to thank Jean Lafont for his careful reading of an earlier version of this paper.

\section{Two 2-complexes}
\label{twocomplexes}

Leary introduced the notion of tesselated $n$-gon made out of unit squares and applied Gromov's criterion to determine whether or not it is CAT(0). By using various tesselated CAT(0) $n$-gons, he gave some constructions for locally CAT(0) square complexes (See \cite[Section 2]{LeaMKT} for further details). The following group can be found in Proposition 3 in \cite{LeaMKT}. 

Let $H$ be the group presented on generators $a,b,c,d,e,f$ subject to the following 6-relators \semi
$$\begin{array}{ccc} abcdef,&ab^{-1}c^2f^{-1}e^{2}d^{-1},&a^2fc^2bed,\\ad^{-2}cb^{-2}ef^{-1},&ad^{2}cf^{2}eb^2,&af^{-2}cd^{-1}eb^{-2}\end{array}$$

Then the corresponding presentation 2-complex, denoted by $X_2$, can be realized as a non-positively curved square complex and it follows that $H$ is non-trivial and torsion-free. Also it can be easily verified that $H$ is acyclic. For the future use, we point out that $X_2$ can be constructed by one 0-cell, six loops and six 2-cells, where each loop consists of four 1-cells, and each loop is a closed geodesic.

\begin{lemma}\label{grouph}
$H$ is a duality group of dimension 2.
\end{lemma}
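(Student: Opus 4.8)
The plan is to verify the Bieri--Eckmann cohomological characterisation of duality groups (see \cite{Bie}, \cite{Bro}): a group $G$ of type $\fp$ with $\mathrm{cd}(G)=n$ is a duality group of dimension $n$ if and only if $H^{k}(G;\zz G)=0$ for all $k\neq n$ and $H^{n}(G;\zz G)$ is torsion-free as an abelian group, in which case $H^{n}(G;\zz G)$ is the dualizing module. I would apply this with $G=H$ and $n=2$, so the task splits into: (i) $H$ is of type $\fp$ and $\mathrm{cd}(H)=2$; (ii) $H^{k}(H;\zz H)=0$ for $k=0,1$; (iii) $H^{2}(H;\zz H)$ is torsion-free.

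For (i): since $X_{2}$ is a finite non-positively curved cube complex, its universal cover $\widetilde{X_{2}}$ is $\CAT(0)$, hence contractible, so $X_{2}$ is a finite $K(H,1)$; thus $H$ is of type $F$, a fortiori of type $\fp$, and $\mathrm{cd}(H)\leq\dim X_{2}=2$. Because $H$ is non-trivial and torsion-free it is infinite, and it is not free, since a non-trivial free group has non-zero first homology while $H$ is acyclic; by the Stallings--Swan theorem $\mathrm{cd}(H)\neq 1$, and $\mathrm{cd}(H)\neq 0$ as $H\neq 1$, so $\mathrm{cd}(H)=2$ (and then automatically $H^{2}(H;\zz H)\neq 0$).

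For (ii) and (iii) I would use the standard identification $H^{k}(H;\zz H)\cong H^{k}_{c}(\widetilde{X_{2}})$ with cohomology with compact supports of the universal cover. At once $H^{0}(H;\zz H)=(\zz H)^{H}=0$ since $H$ is infinite. For $k=1,2$, the long exact cohomology sequences of the pairs $(\widetilde{X_{2}},\,\widetilde{X_{2}}\setminus K)$ over compact subsets $K$, together with contractibility of $\widetilde{X_{2}}$, give $H^{1}_{c}(\widetilde{X_{2}})\cong\varinjlim_{K}\widetilde{H}^{0}(\widetilde{X_{2}}\setminus K)$ and $H^{2}_{c}(\widetilde{X_{2}})\cong\varinjlim_{K}H^{1}(\widetilde{X_{2}}\setminus K)$. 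The latter colimit is torsion-free, because $H^{1}(Z;\zz)\cong\mathrm{Hom}(H_{1}(Z;\zz),\zz)$ is torsion-free for any space $Z$ and a filtered colimit of torsion-free groups is torsion-free; this settles (iii). The former colimit vanishes exactly when $\widetilde{X_{2}}$ is one-ended (recall it is unbounded, as $H$ is infinite), and since $H$ acts geometrically on $\widetilde{X_{2}}$ this is equivalent to $H$ being one-ended.

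Thus everything reduces to showing that $H$ has exactly one end, and this is the step I expect to be the real obstacle. I would argue by contradiction via Stallings' theorem on ends: a finitely generated torsion-free group with more than one end is either infinite cyclic or a non-trivial free product. The infinite cyclic case is excluded since $\zz$ is not acyclic, so it remains to show $H$ is freely indecomposable. Here I would exploit the explicit structure of $X_{2}$ --- in particular the link of its unique $0$-cell, which Leary already had to examine in order to verify Gromov's link condition --- checking that it is connected with no cut vertex and invoking a combinatorial indecomposability criterion for groups with aspherical presentation complexes, in the spirit of Lyndon's analysis of one-relator groups; alternatively one may try to derive one-endedness of $\widetilde{X_{2}}$ directly from its $\CAT(0)$ square-complex geometry (that no compact subset separates it) or cite it from \cite{LeaMKT}. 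Granting one-endedness, all hypotheses of the Bieri--Eckmann criterion hold, so $H$ is a duality group of dimension $2$.
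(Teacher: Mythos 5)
Your setup is sound and runs parallel to the paper's up to the decisive point: the Bieri--Eckmann characterisation, the identification $H^{k}(H;\zz H)\cong H^{k}_{c}(\widetilde{X_{2}})$, the computation $\mathrm{cd}(H)=2$, and the torsion-freeness of the top compactly supported cohomology are all correct (the paper is terser, arguing straight from its definition with coefficients $\zz H\otimes A$, using $H^{0}=0$ for infinite groups and $H^{1}(H;\zz H)\cong\zz^{e-1}$ where $e$ is the number of ends). But the one statement you explicitly leave open --- that $H$ is one-ended --- is the crux of the lemma; everything else is routine. None of the routes you sketch closes it: ``no compact subset separates $\widetilde{X_{2}}$'' merely restates one-endedness; \cite{LeaMKT} establishes that $H$ is acyclic, torsion-free and infinite but does not prove one-endedness; and the Stallings detour only trades one-endedness for free indecomposability of $H$, which for a finitely generated torsion-free non-cyclic group is equivalent to what you want, while the ``combinatorial indecomposability criterion for aspherical presentation complexes'' you invoke is not a precise, citable theorem --- Lyndon-style arguments are tailored to one-relator groups, and $H$ has six relators.

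The missing ingredient, and the one the paper uses, is the Brady--Meier criterion \cite{BraMei}: for a locally finite $\CAT(0)$ cubical complex, connectivity of the link $lk(v,X_{2})$ and of the punctured link $Plk(v,X_{2})$ at every vertex implies connectivity at infinity, hence one-endedness, of the complex. Because $X_{2}$ is a compact nonpositively curved square complex and the covering $\widetilde{X_{2}}\to X_{2}$ is a local isometry, these links can be read off from the finitely many squares of $X_{2}$, and a direct finite check shows they are all connected. Your instinct to examine the vertex links (``connected with no cut vertex'') is the right local condition in spirit, but you need an actual theorem converting that local data into connectivity at infinity of the universal cover; Brady--Meier is that theorem, and once it is invoked your argument (or the paper's shorter one) goes through.
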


\begin{proof}
Let $A$ be an abelian group. Since $H$ is infinite, $H^{0}(H ; \zz H \otimes A) = 0$. 

For $H^{1}(H ; \zz H \otimes A)$,  we consider the link of vertex $lk(v,X_2)$ and the punctured link of vertex $Plk(v,X_2)$ in $X_2$. A direct observation verifies that $lk(v,X_2)$ and $Plk(v,X_2)$ are connected for any vertex $v$, so, by \cite{BraMei}, the universal cover $\widetilde{X_{2}}$ of $X_2$ is connected at infinity. It follows that $H$ is 1-ended.

Since $H^{1}(H ; \zz H \otimes A) \cong H^{1}(H ; \zz H)\otimes A$ and $H^{1}(H ; \zz H)$ is a free abelian group of rank $e-1$, where $e$ is the number of ends of the group, $H^{1}(H ; \zz H \otimes A)=0$.
\end{proof}

\begin{remark}
The result of \cite{BraMei} is much more general. Brady and Meier established sufficient conditions ensuring the higher connectivity at infinity and semistablity for locally finite CAT(0) cubical complexes. In particular, they described the higher connectivity at infinity of right angled Artin groups and determined which right angled Artin groups are duality groups.   
\end{remark}

We introduce the other group mentioned in the introduction. Let $K$ be the group presented on generators $v,w,x,y$ subject to a single relator $[v,w][x,y]y^{-1}$. As a one-relator group, $K$ is a duality group of dimension 2. (See \cite[Section 2]{Hau}.) 

\begin{lemma}\label{w}
The presentation 2-complex corresponding to $K$ may be realized as a non-positively curved square complex.
\end{lemma}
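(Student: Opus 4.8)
The plan is to realize the presentation $2$-complex of $K = \langle v,w,x,y \mid [v,w][x,y]y^{-1}\rangle$ as a tesselated polygon in the sense of Leary's construction. The single relator $[v,w][x,y]y^{-1} = vwv^{-1}w^{-1}xyx^{-1}y^{-1}y^{-1}$ has length $9$, so the presentation $2$-complex is a wedge of four circles with a single $2$-cell attached along a $9$-gon. The first step is to choose a cubical (square) structure: rather than using the standard CW structure with one $0$-cell, I would subdivide each of the four generating loops into several $1$-cells and cube the $2$-cell as a tesselated $9$-gon built out of unit squares, exactly as in \cite[Section 2]{LeaMKT}. The $2$-cell is then required to be a disc tesselated by unit squares whose boundary word, read off the corners, spells the relator (with appropriate subdivisions).

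The second step is to verify non-positive curvature via Gromov's link condition: the resulting square complex is locally $\CAT(0)$ if and only if the link of every vertex is a flag simplicial complex with no embedded loop of length $< 4$, i.e.\ girth $\geq 4$. For interior vertices of the tesselated polygon this is automatic from the grid structure; the real work is at the boundary vertices of the polygon and, after identification, at the (few) vertices lying on the image of the generating loops. Following Leary, one must check the angle sum around each such vertex and the link girth. The key quantitative input is that a tesselated $n$-gon can be made $\CAT(0)$ provided the corner angles and the internal tesselation are arranged so that no vertex link acquires a short cycle; for $n = 9$ this is comfortably within the range Leary handles, and one can often reuse his explicit tesselations.

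The main obstacle is the identification pattern on the boundary of the $9$-gon: the letter $y$ appears three times (as $y$, $y^{-1}$, $y^{-1}$) and $v,w,x$ each appear twice, so after gluing, the edges of the $y$-loop get triple-covered by the polygon boundary, and one must ensure that these identifications do not create a vertex link with a bigon or a triangle (which would violate the girth condition) or a non-flag configuration. Concretely I would subdivide the $y$-loop into enough $1$-cells — and correspondingly pad the tesselation of the $9$-gon near the three $y$-subarcs — so that distinct occurrences of $y$-edges in the boundary word never become adjacent after identification, and so that the link of each vertex on the $y$-loop is a disjoint union of arcs glued into a flag complex of girth $\geq 4$. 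Once the link condition is checked vertex by vertex, Gromov's theorem gives that the square complex is non-positively curved, completing the proof. (The gluing lemmas, Lemma \ref{gluinglemma}, are not needed here since this is a single polygon, but the same bookkeeping will recur when $X_2$ and this complex are combined in section \ref{seqgroups}.)
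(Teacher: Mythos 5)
Your strategy is the same as the paper's (realize the $2$-cell as a tesselated nonagon of unit squares in Leary's sense, make the generating loops closed geodesics, and verify Gromov's link condition), but as written there is a genuine gap: you never exhibit a tesselation and labeling, and you never carry out the link check at the base vertex; instead you assert that one can ``subdivide enough'' and ``pad'' the nonagon so that no short cycles appear in the links. That assertion is exactly the content of the lemma, and it cannot be granted generically: realizing a presentation $2$-complex as a non-positively curved square complex is a genuine restriction on the group (subdivision does not change $\pi_1$, so it forces the group to be $\CAT(0)$; for the one-relator presentation $\langle a,t \mid tat^{-1}a^{-2}\rangle$ no such realization exists). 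So the argument must use the specific relator $[v,w][x,y]y^{-1}$, and the crux --- that the link of the single $0$-cell, built from the nine corner passages of the polygon boundary together with the loop identifications, is a simple graph of girth at least $4$ --- has to be checked on a concrete tesselation, which is what the paper does with the explicit nonagon of Figure \ref{tesselatedgon} (each boundary arc mapping onto a loop of four $1$-cells) and a direct calculation.

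Two smaller points. First, your proposed sufficient condition (arrange that distinct occurrences of $y$-edges are never adjacent after identification) is not what the explicit solution does: in the paper's nonagon the two $y$-arcs are adjacent along the top of the polygon, and what saves the link condition there is the angle at that corner (the passage contributes a path of length $2$ between two \emph{distinct} edge-ends of the $y$-loop), not non-adjacency; so if you insist on your condition you still owe an explicit cyclic arrangement and tesselation realizing it. Second, if you subdivide the $y$-loop more finely than the others, beware that the construction in Section \ref{seqgroups} glues $W$ to $X_2$ along loops of length $4$ via isometries ($x=f$, $w=f$, $v=f$), so the loop lengths are not entirely free; this does not affect the truth of the present lemma, but it is a reason the paper fixes every loop to consist of exactly four $1$-cells.
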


\begin{proof}
Consider the cubical subdivision of the tesselated nonagon appeared in Figure \ref{tesselatedgon}. Using this nonagon, one can construct the presentation $2$-complex with one $0$-cell, four loops and one $2$-cell, where each loop consists of four $1$-cells, and verify that each loop is a closed geodesic. Furthermore, by a direct calculation, all the vertex links are flag complexes, so, by Gromov's criterion, the presentation $2$-complex can be realized as a non-positively curved square complex.
\end{proof}

\begin{figure}
\setlength{\unitlength}{1cm}
\begin{center}
\begin{picture}(7,4)
\thicklines

\put(0.5,0.5){\circle*{.2}}
\put(2.5,0.5){\circle*{.2}}
\put(4.5,0.5){\circle*{.2}}
\put(6.5,0.5){\circle*{.2}}
\put(0.5,2.5){\circle*{.2}}
\put(1.5,3.5){\circle*{.2}}
\put(3.5,3.5){\circle*{.2}}
\put(5.5,3.5){\circle*{.2}}
\put(6.5,2.5){\circle*{.2}}

\put(0.5,0.5){\line(1,0){6}}
\put(0.5,1.5){\line(1,0){6}}
\put(0.5,2.5){\line(1,0){6}}
\put(1.5,3.5){\line(1,0){4}}

\put(2.5,0.5){\vector(-1,0){1.3}}
\put(4.5,0.5){\vector(-1,0){1.3}}
\put(4.5,0.5){\vector(1,0){1.3}}
\put(1.5,3.5){\vector(1,0){1.3}}
\put(3.5,3.5){\vector(1,0){1.3}}
\put(2.5,2.5){\vector(-1,0){1.3}}
\put(4.5,2.5){\vector(1,0){1.3}}

\put(0.5,2.5){\vector(0,-1){1.3}}
\put(6.5,0.5){\vector(0,1){1.3}}

\put(0.5,0.5){\line(0,1){2}}
\put(1.5,0.5){\line(0,1){3}}
\put(2.5,0.5){\line(0,1){3}}
\put(3.5,0.5){\line(0,1){3}}
\put(4.5,0.5){\line(0,1){3}}
\put(5.5,0.5){\line(0,1){3}}
\put(6.5,0.5){\line(0,1){2}}

\put(1.5,0){$v$}
\put(3.5,0){$w$}
\put(5.5,0){$x$}
\put(7,1.5){$y$}
\put(6,2.8){$x$}
\put(4.5,4){$y$}
\put(2.5,4){$y$}
\put(1,2.8){$v$}
\put(0,1.5){$w$}

\end{picture}
\end{center}
\caption{A tesselated $\CAT(0)$ nonagon with edge identifications.}
\label{tesselatedgon}
\end{figure}
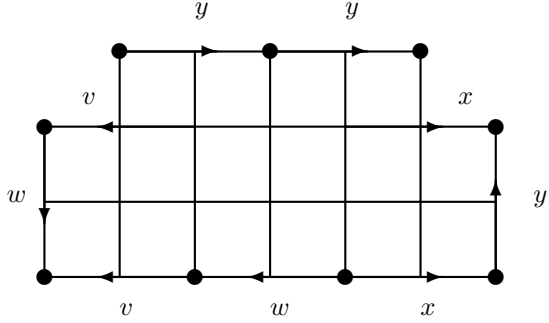

\section{Some acyclic complexes}
\label{seqgroups}
In this section, we construct a locally $\CAT(0)$ acyclic cubical complex $X_n$ for $n \geq 2$, whose fundamental group is a duality group. The following result is a version of \cite[Proposition 2.1]{Hau}, strengthened to work in the context of nonpositive curvature. The proof follows that in \cite{Hau}, but we use the group $H$ described above instead of $\Omega_{2}$ there and give explicit topological versions of its push-out constructions. Lemma \ref{gluinglemma} will control the geometry.

\begin{proposition}\label{acyclicgroups}
For $n \geq 2$, there is a locally $\CAT(0)$ cubical complex $X_n$ satisfying 
\begin{itemize}
\item $X_{n}$ is acyclic.
\item $\pi_{1}(X_{n})$ is a duality group of dimension $n$.
\item There is an isometric (and therefore $\pi_1$-injective) embedding $X_{n} \times S^{1} \to X_{n+1}$.
\end{itemize}
\end{proposition}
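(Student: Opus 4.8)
The plan is to construct the complexes $X_n$ by induction on $n$, turning the group-theoretic push-out constructions of \cite[Proposition 2.1]{Hau} into explicit gluings of cube complexes and invoking Lemma \ref{gluinglemma} at each gluing to keep the complex non-positively curved; acyclicity is then checked by Mayer--Vietoris and the duality property by Proposition \ref{dualamal}. For the base case $n=2$, take $X_2$ to be the square complex of Section \ref{twocomplexes} attached to $H$: it is a finite non-positively curved square complex, it is acyclic, and by Lemma \ref{grouph} its fundamental group $H$ is a duality group of dimension $2$.

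For the inductive step, given $X_n$, first form $P:=X_n\times S^1$ with $S^1$ a $4$-cycle, so that $P$ is a finite non-positively curved cube complex; since a product of duality groups of dimensions $n$ and $1$ is a duality group of dimension $n+1$, $\pi_1(P)=\pi_1(X_n)\times\zz$ is a duality group of dimension $n+1$. The only obstruction to using $P$ as $X_{n+1}$ is that $P$ has the homology of $S^1$, with $H_1(P)$ generated by the circle $c=\{*\}\times S^1$. I would then build $X_{n+1}$ from $P$ by push-out constructions modelled on \cite{Hau}: at each step one glues the current complex $Z$ to an auxiliary finite non-positively curved cube complex $W$ along a common subcomplex $A$ --- via the Simple Gluing Lemma, Lemma \ref{gluinglemma}(1), when $A$ can be taken locally convex in both factors, and via Gluing with a Tube, Lemma \ref{gluinglemma}(2), in general --- arranging that:
\begin{itemize}
\item $\pi_1(A)$ is a duality group of dimension $n$ while $\pi_1(Z)$ and $\pi_1(W)$ are duality groups of dimension $n+1$; then by van Kampen and Proposition \ref{dualamal} the amalgam $\pi_1(Z)*_{\pi_1(A)}\pi_1(W)$ is again a duality group of dimension $n+1$;
\item the inclusions of $A$ are positioned so that the Mayer--Vietoris sequence of the push-out cancels the surviving reduced homology of $Z$ and of $W$ --- in particular a homology class of $A$ maps onto $[c]$ --- so that after the final step $\widetilde H_*(X_{n+1})=0$.
\end{itemize}
The auxiliary complexes are assembled from $X_n$, from the lower complexes $X_k$ with $k<n$ --- especially from $X_{n-1}\times S^1$, which by the inductive hypothesis embeds (onto a locally convex subcomplex) in $X_n$, hence in $P$, and which already provides an $n$-dimensional duality group containing $c$ --- and from the complex $Y_K$ of Section \ref{twocomplexes}: this is where the one-relator group $K$ enters, the point being that its relator $[v,w][x,y]y^{-1}$ makes the loop $y$ null-homologous in $Y_K$, so $y$ can be used to kill a one-dimensional homology class without spoiling the duality property. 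That in $X_2$ and in $Y_K$ every defining loop is a closed geodesic, and that products of closed geodesics and of the embedded $X_k$ are locally convex, is exactly what makes Lemma \ref{gluinglemma} applicable. Finally, $P=X_n\times S^1$ survives in $X_{n+1}$ as a locally convex subcomplex, which gives the required isometric (hence $\pi_1$-injective) embedding $X_n\times S^1\to X_{n+1}$.

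The hard part is making the two displayed conditions hold for one and the same pair $(A,W)$ at each step. Keeping the fundamental groups within the class of duality groups forces, through Proposition \ref{dualamal}, the amalgamated subgroup at each stage to be a duality group of dimension exactly $n$; at the same time that subgroup must be homologically rich enough, and correctly placed, to annihilate $H_1(P)$, and all of this must be realised by locally convex cubical subcomplexes --- or by local isometries of compact cube complexes --- so that the Gluing Lemmas keep the curvature non-positive. This is precisely the balance that \cite{Hau} strikes at the group-theoretic level and that the present construction must reproduce at the level of cube complexes, with $K$ (and its tessellated $\CAT(0)$ nonagon from Lemma \ref{w}) doing the work in the base dimension and the previously built $X_k$ supplying the higher-dimensional duality subgroups.
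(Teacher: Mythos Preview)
Your outline captures the right ingredients and the right tools---Lemma \ref{gluinglemma} to preserve non-positive curvature, Proposition \ref{dualamal} to preserve duality, Mayer--Vietoris to track homology, and the group $K$ to absorb an unwanted $H_1$ class---and in that sense it is the paper's strategy. But what you have written is a plan, not a proof: none of the gluings are actually specified, and the passage ``The hard part is making the two displayed conditions hold'' names precisely the content of the argument and then leaves it undone.

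The main gap is the construction of $X_3$. Your inductive scheme leans on the embedding $X_{n-1}\times S^1\hookrightarrow X_n$ to produce an $n$-dimensional duality subgroup containing the circle $c$; but for $n=2$ there is no $X_1$, so the first step cannot bootstrap this way. This is exactly why the paper treats $X_3$ by a separate, fairly elaborate construction: one first builds the intermediate square complexes
\[
Y=W\cup_{x=f}X_2,\qquad Z_v=Y\cup_{w=f}X_2,\qquad Z_w=Y\cup_{v=f}X_2,
\]
so that $H_1(Z_v)=\zz\langle v\rangle$ and $H_1(Z_w)=\zz\langle w\rangle$; then forms $U_v=(S^1\times Z_v)\cup_{S^1\times S^1}(X_2\times S^1)$ and likewise $U_w$, each a $3$-dimensional duality group with $H_1=\zz$; and finally sets $X_3=U_v\cup_Y U_w$, where the two surviving $H_1$ classes cancel against $H_1(Y)=\zz^2$. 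The role of $K$ here is more delicate than ``$y$ is null-homologous'': one needs a $2$-dimensional duality group whose $H_1$ can be pared down, generator by generator, by amalgamating over loops identified with loops of the acyclic group $H$, while keeping every amalgamated loop a locally convex closed geodesic. Your proposal does not supply any of these pieces.

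For $n\geq 4$ your picture is closer to what actually happens, but the paper's step is a single gluing rather than a sequence:
\[
X_n=(X_2\times X_{n-2})\ \cup_{\,S^1\times X_{n-2}}\ (S^1\times X_{n-1}),
\]
with the circle mapping to the loop $f\subset X_2$ on one side and $X_{n-2}$ mapping into $X_{n-1}$ via the previously constructed embedding on the other. Acyclicity and duality then follow in one Mayer--Vietoris computation and one application of Proposition \ref{dualamal}. To turn your outline into a proof you would need to write down this gluing (or an equivalent one) explicitly and, crucially, give the full construction of $X_3$ that seeds the induction.
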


\begin{remark}
In this section and the next, we will repeatedly use the following homological facts, often without comments : 
\begin{enumerate}
\item The direct product of duality groups of dimension $m$ and $n$ is a duality group of dimension $m+n$. (\cite[Lemma 1.2]{Hau})
\item If $Y$ is acyclic, then $H_{*}(X\times Y) \simeq H_{*}(X)$. It follows from the K\"unneth formula.
\end{enumerate}
Similarly, we will use the geometric facts without further comments. Each is an easy exercise in $\CAT(0)$ geometry. We refer the reader to \cite{BriHae}.
\begin{enumerate}
\item If $f:C \to C'$ is an isometry between convex subcomplexes of cubical complexes $A$ and $B$, respectively, then 
$$A\cup_{f}B = A \coprod B/\{c \sim f(c) | c \in C\}$$
is a cubical complex in which $A$ and $B$ are isometrically embedded.
\item A subcomplex $Y$ of a cubical complex $X$ is isometrically embedded if and only if it is convex.
\end{enumerate}
\end{remark}

\begin{proof}[\,\,of Proposition \ref{acyclicgroups}]
Let $W$ be the presentation 2-complex corresponding to $K$, constructed in Lemma \ref{w}. From now on, we abuse symbols $a,b,\cdots, f$ and $v,\cdots, y$ for our purpose. They will either represent loops in $X_2$ and $W$, or group elements in $H$ and $K$, respectively.
Define

$$Y = W \mathop{\cup}_{x=f} X_{2},\quad Z_v = Y \mathop{\cup}_{w=f}  X_{2},\quad Z_w = Y \mathop{\cup}_{v=f} X_{2},$$
where we identify $0$-cells. Then one has
\begin{enumerate}
\item $Y, Z_{v}$ and $Z_{w}$ are locally $\CAT(0)$ cubical complexes. Each loop (of length 4) is closed, locally convex. So Lemma \ref{gluinglemma} applies.

\item $\pi_{1}(Y),\pi_{1}(Z_{v})$ and $\pi_{1}(Z_w)$ are duality groups of dimension 2. It follows from Lemma \ref{dualamal}.

\item By the Mayer-Vietoris sequence,

$H_1(Y\semi \zz) = \zz \oplus \zz$, generated by $v$ and $w$.

$H_1(Z_v\semi \zz) = \zz$, generated by $v$.

$H_1(Z_w\semi \zz) = \zz$, generated by $w$.

\item $H_2(Y\semi \zz) = H_2(Z_v\semi \zz) = H_2(Z_w\semi \zz) =0$.
\end{enumerate}

Let $S^{1}$ be a circle consisting of four 1-cells and four 0-cells. Consider $j_{c} : S^{1} \to X_{2}$ mapping $S^{1}$ simplicially onto the loop $c$, $j_{v} : S^{1} \to Z_{v}$ mapping $S^{1}$ simplicially onto the loop $v$ and $j_{w} : S^{1} \to Z_{w}$ is the map sending $S^{1}$ simplicially onto the loop $w$. Define 
$$U_{v} = (S^{1} \times Z_{v}) \coprod (X_{2} \times S^{1})/\sim,$$ 
where the equivalence relation is generated by $[(s,j_{v}(s')) \sim (j_{c}(s),s'), \forall s,s' \in S^{1}]$. Similarly, define 
$$U_{w}=(S^{1} \times Z_{w}) \coprod (X_{2} \times S^{1})/\sim,$$
where the equivalence relation is generated by $[(s,j_{w}(s')) \sim (j_{c}(s),s'), \forall s,s' \in S^{1}]$.

By Lemma \ref{gluinglemma}, $U_v$ and $U_w$ are locally $\CAT(0)$ cubical complexes. Since $\pi_{1}(S^{1} \times Z_{v}), \pi_{1}(S^{1} \times Z_{w})$ and $\pi_{1}(X_{2} \times S^{1})$ are duality groups of dimension 3 and $\pi_{1}(S^{1} \times S^{1})$ is a duality group of dimension 2, Lemma \ref{dualamal} implies that $\pi_{1}(U_v)$ and $\pi_{1}(U_w)$ are duality groups of dimension 3. Furthermore, by the K\"unneth formula and the Mayer-Vietoris sequence,
$$H_{1}(U_{v}) = \mathbb{Z}\,\, \textrm{(generated by $v$)},\,\,\, H_{2}(U_{v}) = 0$$
$$H_{1}(U_{w}) = \mathbb{Z}\,\, \textrm{(generated by $w$)},\,\,\, H_{2}(U_{w}) = 0$$

Using two inclusions $i_{v} : Y \hookrightarrow Z_{v} \hookrightarrow U_{v}$ and $i_{w} : Y \hookrightarrow Z_{w} \hookrightarrow U_{w}$, define $X_3$ to be

$$X_3 = U_{v} \mathop{\cup}_{Y} U_{w}$$

Then $X_{3}$ has the following properties :

\begin{enumerate}
\item $X_3$ is a locally $\CAT(0)$ cubical complex. It follows from Lemma \ref{gluinglemma}.
\item $X_3$ is acyclic and $\pi_{1}(X_3)$ is a duality group of dimension 3 : The Mayer-Vietoris sequence shows that $X_{3}$ is acyclic and $\pi_{1}(X_3)$ is a duality group of dimension 3 by Lemma \ref{dualamal}.
\item $X_{2} \times S^{1} \hookrightarrow U_{v} \hookrightarrow X_{3}$ is an isometric embedding and therefore, it is $\pi_{1}$-injective.
\end{enumerate}

Now suppose, by induction, that $X_i$ is constructed for $2 \leq i \leq n-1, n \geq 4$. Let $j_{f} : S^{1} \to X_2$ be the map sending $S^{1}$ onto the loop $f$. Using $j_{f}$ and the inclusion $i_{n-1} : X_{n-2} \to X_{n-1}$, define 
$$X_{n} = (X_{2} \times X_{n-2}) \coprod (S^{1} \times X_{n-1})/\sim , $$ 
where the equivalence relation is generated by $[(j_{f}(s), x) \sim (s,i_{n-1}(x)), \forall s \in S^{1},\, \forall x \in X_{n-2}]$.

Then $X_n$ satisfies all the property of the proposition.
\end{proof}


\section{Proof of the Theorem}
\label{mainproof}
In this section, we construct two functors to prove the main theorem.

For given a finite complex $X$, let $S(X)$ be the category whose objects are subcomplexes and morphisms are inclusions. For nonnegative integer $n$, let $\cc(n)$ be the category whose objects are compact locally $\CAT(0)$ cubical complexes such that the fundamental group of each component is a duality group of dimension $n$. Morphisms are isometric embeddings as closed subcomplexes.

\begin{theorem}
\label{main2}
Let $X$ be a finite complex. Then there exists $n$, depending on $X$, and two functors
\[L : S(X) \to \cc(n),\qquad M :S(X) \to \cc(n)\]
satisfying
\begin{enumerate}
\item For each connected subcomplex $Y$, there is an acyclic map $\beta_{Y} : L(Y) \to Y$.
\item For any subcomplex $Y$, $M(Y)$ is acyclic.
\item There is a natural transformation from $L$ to $M$, i.e. for any subcomplex $Y$, $L(Y)$ is isometrically embedded into $M(Y)$ and if $Y' \subset Y$, then the following diagram commutes.
$$\begin{array}{ccc}L(Y') & \to & M(Y')\\\downarrow&&\downarrow\\L(Y)& \to &M(Y)\end{array}$$
\end{enumerate}
\end{theorem}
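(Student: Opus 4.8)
The plan is to construct the functor $L$ first and to obtain $M$ from it by a coning trick. Put $\bar X := X * \{p\}$, the simplicial cone on $X$ with a fresh apex $p$, and fix an integer $n$ large enough in terms of $\dim X$ for all the constructions below to go through. I will build a functor $L : S(\bar X) \to \cc(n)$ together with, for every $Y \in S(\bar X)$, an acyclic map $\beta_Y : L(Y) \to Y$ compatible with the inclusions $Y' \subseteq Y$. Restricting to $S(X) \subseteq S(\bar X)$ gives the first functor and verifies (1). Since $Y \mapsto \mathrm{cone}_p(Y) = Y * \{p\}$ is a functor $S(X) \to S(\bar X)$, the composite $M := L \circ \mathrm{cone}_p$ is a functor $S(X) \to \cc(n)$; as $CY := \mathrm{cone}_p(Y)$ is contractible and $\beta_{CY}$ is acyclic, $M(Y) = L(CY)$ is acyclic, giving (2); and the inclusions $Y \hookrightarrow CY$ induce convex embeddings $L(Y) \hookrightarrow L(CY) = M(Y)$ forming a natural transformation $L \Rightarrow M$ whose squares commute because every map in sight is an honest inclusion, giving (3). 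So everything reduces to the construction of $L$.

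For $L$ I follow the Baumslag--Dyer--Heller/Hausmann pattern: choose a model $P_k$ for each dimension $k$ and define $L$ of a subcomplex as an iterated pushout of these. Set $P_0 := X_n$. Inductively, given $P_0,\dots,P_{k-1}$ together with convex embeddings $P_{\dim\tau} \hookrightarrow P_k$ for the faces $\tau$ of $\Delta^k$, forming a functor on the face poset, let $\partial P_k := \bigcup_{\tau \subsetneq \Delta^k} P_{\dim\tau}$; by the remark on gluing convex subcomplexes, Lemma \ref{gluinglemma} and Proposition \ref{dualamal} this lies in $\cc(n)$, and a Mayer--Vietoris computation shows it has the homology of $S^{k-1}$. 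Then build $P_k$ from $\partial P_k$ by a finite sequence of elementary moves, each a gluing of one of the two types of Lemma \ref{gluinglemma} of complexes in $\cc(n)$ along a convex subcomplex whose fundamental group is a duality group of dimension $n-1$ --- tori and products such as $S^1 \times X_{n-2}$, exactly the kind of gluing used to build the $X_n$ in the proof of Proposition \ref{acyclicgroups}. The moves are chosen, using $X_2$ with its acyclic $\CAT(0)$-cubical group $H$ where Hausmann uses $\Omega_2$, and $W$ with the one-relator duality group $K$ whose defining relation $[v,w][x,y]y^{-1}$ writes a generator as a product of commutators, so as to kill the class of $\partial P_k$ in top degree while introducing no new homology; thus $P_k$ is acyclic. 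Proposition \ref{dualamal} keeps every fundamental group a duality group of dimension $n$, and Lemma \ref{gluinglemma} (all loops glued along being closed geodesics, as arranged in Sections \ref{twocomplexes}--\ref{seqgroups}) keeps everything compact, cubical and locally $\CAT(0)$.

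Recording $P_k$ as an explicit sequence of moves \emph{starting from $\partial P_k$} is what makes $L$ functorial. Fix a total order on the simplices of $\bar X$ refining dimension; define $L(Y)$ by starting from $\coprod_{v \in Y^{(0)}} X_n$ and, running through the simplices $\sigma$ of $Y$ in this order, performing on the already-constructed copy of $L(\partial\sigma)$ (which is a copy of $\partial P_{\dim\sigma}$) the moves that build $P_{\dim\sigma}$ from $\partial P_{\dim\sigma}$. Each move is still a gluing of the above kind, so $L(Y) \in \cc(n)$; and if $Y' \subseteq Y$ the moves for $Y'$ are an initial segment of those for $Y$ and the remaining ones are attached along subcomplexes already present, whence a convex embedding $L(Y') \hookrightarrow L(Y)$ functorial in the pair. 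The maps $\beta_Y$ are assembled from simplexwise maps $\beta_\sigma : P_{\dim\sigma} \to \sigma$, which exist because $\sigma$ is contractible and $P_{\dim\sigma}$ is acyclic and which restrict to $\beta_{\partial\sigma}$ by induction; since $L(Y)$ is the iterated pushout of the $P_{\dim\sigma}$ over the $L(\partial\sigma)$ and $Y$ that of the $\sigma$ over the $\partial\sigma$, acyclicity of $\beta_Y$ follows by induction on simplices from the standard fact that a map of homotopy pushouts which is acyclic on all three corners is acyclic (compare Mayer--Vietoris sequences). Taking $T_X := L(X)$ and $t_X := \beta_X$ then proves Theorem \ref{main}, a locally $\CAT(0)$ complex being aspherical.

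The main obstacle is the inductive construction of the models $P_k$: one must at the same time (i) kill exactly the top homology of $\partial P_k$ so that $P_k$ is acyclic, (ii) glue only along convex subcomplexes whose fundamental group is a duality group of dimension $n-1$, so that Proposition \ref{dualamal} applies and one never leaves $\cc(n)$, (iii) keep every loop glued along a closed geodesic so that Lemma \ref{gluinglemma} applies, and (iv) make the whole family $\{P_k\}$ and its face inclusions strictly functorial over the simplex category so that $L$ is an honest functor. Reconciling (i) with (ii)--(iii) is precisely where Hausmann's algebraic pushouts must be replaced by the explicit $\CAT(0)$-cubical constructions of Sections \ref{twocomplexes} and \ref{seqgroups}.
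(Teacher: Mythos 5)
Your reduction of $M$ to $L$ via the cone ($M := L\circ\mathrm{cone}_p$, applied to $\bar X = X * \{p\}$) is a clean idea and would indeed dispose of conditions (2) and (3) once a suitable $L$ exists. But the heart of the theorem --- the construction of $L$ --- is only asserted, not carried out, and the one concrete claim you make about it is false as stated. You set $P_0 = X_n$ and form $\partial P_k = \bigcup_{\tau\subsetneq\Delta^k}P_{\dim\tau}$, claiming this lies in $\cc(n)$ by Proposition \ref{dualamal}. That proposition, however, requires the amalgamating subgroup to be a duality group of dimension $n-1$; in your $\partial P_k$ the copies of $P_{k-1}$ (duality dimension $n$) are glued along copies of $P_{k-2}$, which also have duality dimension $n$. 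An amalgam of $n$-dimensional duality groups over an $n$-dimensional duality group is in general not a duality group, so $\partial P_k\in\cc(n)$ is unjustified. The paper's construction exists precisely to dodge this: every gluing is arranged to be along a codimension-one duality subcomplex by multiplying one side by $X_2$ and embedding it in the other side times $X_3$ (via $X_2\times S^1\hookrightarrow X_3$), e.g.\ $L_1(Y)=(L(Y\cap X)\times X_3)\cup_{L(\partial e)\times X_2}(M(\partial e)\times X_3)$. The price is that the duality dimension increases (by $10$) each time a simplex is attached, and the already-constructed values of the functor must be re-multiplied by acyclic factors at each stage; in particular the resulting $n$ depends on the number of simplices of $X$, not just on $\dim X$, which is incompatible with your scheme of fixed models $P_k$, one per simplex dimension, all living in a single $\cc(n)$ chosen "large enough in terms of $\dim X$".

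The second, equally essential, missing piece is the "finite sequence of elementary moves" turning $\partial P_k$ (a homology $(k-1)$-sphere) into an acyclic $P_k$ while gluing only along convex, codimension-one duality subcomplexes that are closed geodesics or products thereof. You acknowledge in your last paragraph that reconciling (i)--(iii) is the main obstacle, but that reconciliation is exactly the content of the theorem: in the paper it is done by the explicit staged constructions ($L_1,\check M_2$, the double mapping cylinder $K$ with the local isometry $\phi$ collapsing $\hat L_2(\partial e)\times[0,4]$ onto $\hat L_1(\partial e)\times X_2\times\{c\}\subset \hat L_1(\partial e)\times X_3$, then $L_3,M_3$ and a final dimension equalization), together with geodesic arguments verifying that $L_3(Y)$ is convex in $M_3(Y)$. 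Without an explicit replacement for these steps, your plan is a program rather than a proof. (Minor further points: with a fixed total order on simplices, the moves for $Y'\subset Y$ are interleaved with, not an initial segment of, those for $Y$, so functoriality and convexity of $L(Y')\subset L(Y)$ need the kind of geodesic argument the paper gives; and acyclicity of $\beta_Y$ with arbitrary local coefficients needs the Mayer--Vietoris/five-lemma argument spelled out, as in the paper.)
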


\begin{remark} 
\begin{enumerate}
\item Theorem \ref{main2} is again a geometrized version of \cite[Proposition 3.1]{Hau}. For comparison, there $\cc(n)$ refers to the category of finite simplicial complexes, satisfying the same group-theoretic property, but no geometric one.
\item Theorem \ref{main} is a direct consequence of the above. (Take $L(X)$ for $T_{X}$.) 
\end{enumerate}
\end{remark}

\begin{proof}
The proof uses an induction on $m_{X}$, the number of simplices of $X$ of positive dimension.

Suppose that $X$ has no simplices of positive dimension, i.e. $m_{X}= 0$. In this case, $X$ is discrete. For any subset $Y$ of $X$, by taking $L(Y) = Y, M(Y) = CY$, the cone on $Y$, and $\beta_{Y}$ the identity, one can easily verify all the conditions in the Theorem.

Suppose that $(n,L,M,\beta)$ is constructed for $X$ and let $\redX = X \cup e$, where $e$ is a simplex of positive dimension. We then construct $(\overline{n},\overline{L},\overline{M},\overline{\beta})$ for $\overline{X}$.
\begin{enumerate}
\item For a subcomplex $Y$ of $\redX$, define

$$L_{1}(Y) = \left\{\begin{array}{cc}L(Y) \times X_3, & Y \subset X \\ (L(Y\cap X) \times X_3) \cup_{L(\partial e) \times X_2}  (M(\partial e) \times X_3) , & e \subset Y \end{array}\right.$$

If $Y \subset X$, then $L_1(Y)$ is a locally CAT(0) cubical complex as a product of two locally CAT(0) complexes. Suppose $e \subset Y$. In order to simplify formulations, we introduce the following notations.
$$L_1(\mathring{e}) = M(\partial e) \times X_3,\qquad \hat{L}_{1}(\partial e) = L(\partial e) \times X_2$$
Note that $L_1(\mathring{e})$ and $\hat{L}_{1}(\partial e)$ are locally $\CAT(0)$ cubical complexes and $\hat{L}_{1}(\partial e)$ is isometrically embedded in $L_1(\mathring{e})$ as a closed subcomplex. With notations above, if $e \subset Y$, then
$$L_1(Y) = (L(Y\cap X)\times X_3)\cup_{\hat{L}_{1}(\partial e)} L_1(\mathring{e}).$$
By induction, $\hat{L}_{1}(\partial e)$ is a closed, isometrically embedded subcomplex of $L(Y\cap X) \times X_3$ and $L_1(\mathring{e})$. Lemma \ref{gluinglemma} implies that  $L_1(Y)$ is a locally $\CAT(0)$ cubical complex. Also $\pi_{1}(L_{1}(Y))$ is a duality group of dimension $n+3$, as it is an amalgamated free product of two duality groups of dimension $n+3$ along a duality group of dimension $n+2$. This shows that $L_{1}(Y) \in \cc(n+3)$. 

At this step, $M_1$ will be defined only for $Y \subset X$ by

\[M_{1}(Y) = M(Y) \times X_3\]

\item  For $e \subset \redX$, define
$$\check{M}_{2}(e) = (\hat{L}_{1}(\partial e) \times X_3) \cup_{\hat{L}_{1}(\partial e) \times X_2} (L_{1}(\mathring{e})\times X_2)$$

Then $\check{M}_{2}(e) \in \cc(n+5)$. Again, we introduce the following notations.

\begin{enumerate}

\item $L_{2}(Y)  = L_{1}(Y) \times X_2$, for $Y\subset X$. Then $L_{2}(Y) \in \mathbf{C}(n+5)$

\item $M_{2}(Y) = M_{1}(Y) \times X_2$, for $Y \subset X$. Then $M_{2}(Y) \in \mathbf{C}(n+5)$

\item $\hat{L}_{2}(\partial e) = \hat{L}_{1}(\partial e) \times X_2 \in \mathbf{C}(n+4)$

\item $L_{2}(\mathring{e}) = L_{1}(\mathring{e})\times X_2 \in \mathbf{C}(n+5)$

\end{enumerate}

Using the notations introduced above, we have
$$\check{M}_{2}(e) = (\hat{L}_{1}(\partial e) \times X_{3}) \cup_{\hat{L}_{2}(\partial e)} L_{2}(\mathring e).$$

Note that
$$M_{2}(\partial e) = (M(\partial e)\times X_{3})\times X_{2} = L_{2}(\mathring{e}),\,\,\, \hat{L}_{2}(\partial e) = (L(\partial e)\times X_{2})\times X_{2}.$$
From the two inclusions $i_{0} : \hat{L}_{2}(\partial e) \hookrightarrow M_{2}(\partial e),\, i_{4} : \hat{L}_{2}(\partial e) \hookrightarrow L_{2}(\mathring e)$, consider the following double mapping cylinder.

$$K = M_{2}(\partial e) \mathop{\cup}_{i_0}\, (\hat{L}_{2}(\partial e) \times [0,4]) \mathop{\cup}_{i_4}  L_{2}(\mathring e)$$
By Lemma \ref{gluinglemma}, $K$ is locally $\CAT(0)$. We define the map $\phi$ from $K$ to $\check{M}_{2}(e)$ as follows.
\begin{itemize}
\item $\phi$ maps $M_{2}(\partial e)$ and $L_{2}(\mathring e)$ to $L_{2}(\mathring e) \subset \check{M}_{2}(e)$.
\item Recall in Proposition \ref{acyclicgroups} that $X_{2} \times S^{1}$ is isometrically embedded in $X_3$ and $S^{1}$ is represented by the loop $c$. $\phi$ maps the cylinder $\hat{L}_{2}(\partial e) \times [0,4] = \hat{L}_{1}(\partial e) \times X_{2} \times [0,4]$ to $\hat{L}_{1}(\partial e) \times X_{2} \times \{c\} \subset \hat{L}_{1}(\partial e) \times X_{3}$.
\end{itemize}

\begin{figure}
\begin{center}
\includegraphics[scale=0.3]{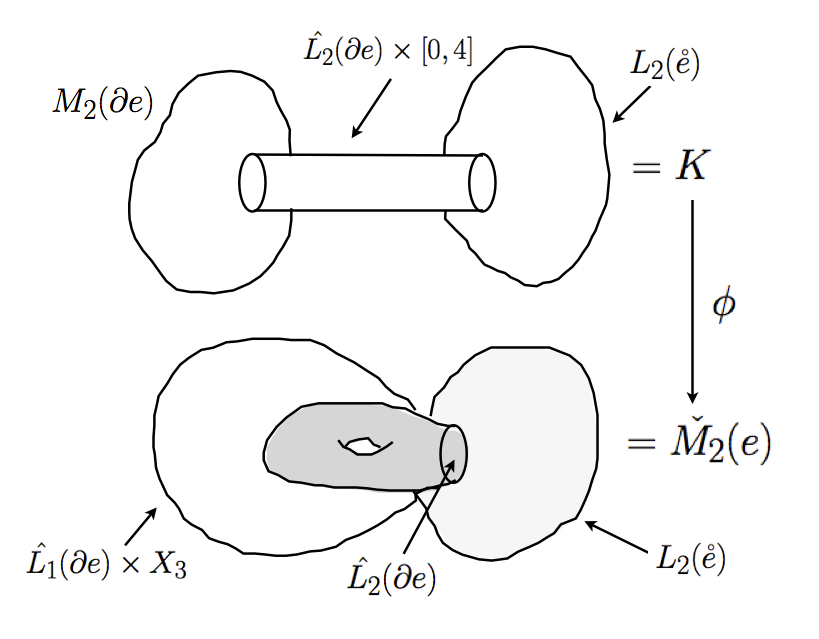}
\caption{The mapping Cylinder $M_{2}(e)$}
\end{center}
\label{mappingcylinder}
\end{figure}

Note that $K$ and $\check{M}_{2}(e)$ are compact and $\phi$ is a local isometry. Hence by Lemma \ref{gluinglemma}, the mapping cylinder by $\phi$ is also a locally $\CAT(0)$ cubical complex.  Denote this mapping cylinder by $M_{2}(e)$.(See Figure \ref{mappingcylinder}.) Since $M_{2}(e)$ is homotopy equivalent to  $\check{M}_{2}(e)$, $M_{2}(e) \in \cc(n+5)$.

\item For each subcomplex $Y$ of $\overline{X}$, define
$$M_{3}(Y) = \left\{\begin{array}{cc}M_{2}(Y) \times X_3,  & Y \subset X \\(M_{2}(Y\cap X) \times X_3) \cup_{M_{2}(\partial e)\times X_2} (M_{2}(e) \times X_3), & e \subset Y \end{array}\right.$$

Note that the inclusion of $M_{2}(\partial e)$ into $M_{2}(e)$ was explained above. 

Let $\check{L}_{2}(\mathring e) = \hat{L}_{2}(\partial e) \times [0,4] \mathop{\cup}_{i_{4}} L_{2}(\mathring e) \subset K$. For each $Y$ in $\redX$,

$$L_{3}(Y) = \left\{\begin{array}{cc}L_{2}(Y) \times X_2, & Y \subset X \\ (L_{2}(Y\cap X) \times X_2) \cup_{\hat{L}_{2}(\partial e)\times X_2}  (\check{L}_{2}(\mathring{e}) \times X_2), & e \subset Y \end{array}\right.$$

If $Y \subset X$, then it is obvious, by induction, that $L_{3}(Y)$ is isometrically embedded in $M_{3}(Y)$ as a closed subcomplex.

If $e \subset Y$, then $L_{3}(Y)$ is isometrically embedded in $M_{3}(Y)$ : Since every geodesic in $M_{3}(Y)$ connecting a point in $L_{2}(Y \cap X) \times X_2$ and a point in $\check{L}_{2}(\mathring{e}) \times X_2$ must pass through $\hat{L}_{2}(\partial e)\times X_2$, and $L_{2}(Y \cap X) \times X_2$ and $\check{L}_{2}(\mathring{e}) \times X_2$ are isometically embedded in $M_{2}(Y\cap X) \times X_3$ and $M_{2}(e) \times X_3$, respectively, therefore, it is a geodesic in $L_{3}(Y)$.  This verifies that $(L_{3}, M_{3})$ satisfies the condition 3.

\begin{figure}
\begin{center}
\includegraphics[scale=0.3]{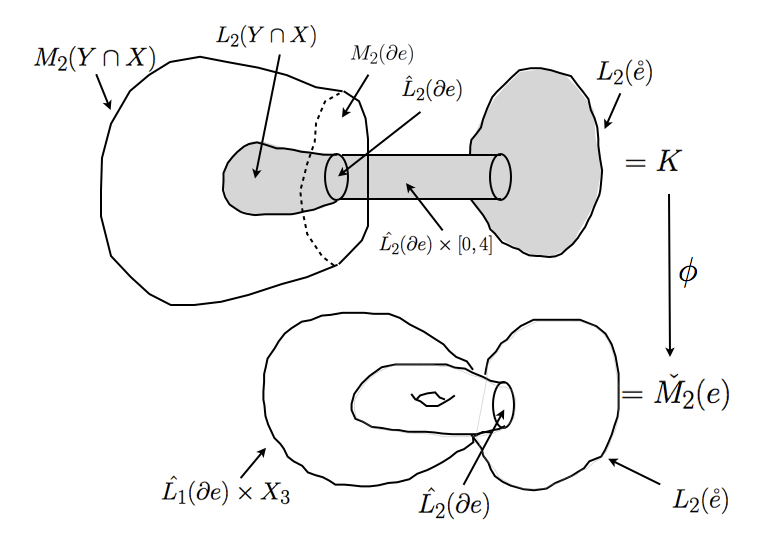}
\caption{$L_{3}(Y) \subset M_{3}(Y)$ if $e \subset Y$}
\end{center}
\label{L3inM3}
\end{figure}

Let $Y$ be a connected subcomplex of $\overline{X}$. 

If $Y \subset X$, then $L_{3}(Y) = L(Y) \times X_{3} \times X_{2} \times X_{2}$. By induction, there is an acyclic map from $L_{3}(Y)$ to $Y$. Suppose that $e \subset Y$. Recall $L_{1}(Y) = (L(Y\cap X) \times X_3) \cup_{L(\partial e) \times X_2}  (M(\partial e) \times X_3)$ and, by induction, $L(\partial e) = \partial e$ and $M(\partial e)$ is a cone on $\partial e$. By considering the maps $L(Y\cap X) \times X_{3} \to Y \cap X, L(\partial e) \times X_2 \to \partial e$ and $M(\partial e) \times X_3 \to e$, we obtain a map from $L_{1}(Y) \to Y$. The acyclicity follows from the Mayer-Vietoris sequence and the five lemma. By respecting products and using projections, this acyclic map extends to an acyclic map from $L_{3}(Y)$ to $Y$.(See Figure \ref{L3inM3}.) This verifies that $(L_{3},M_{3})$ satisfies condition 1.

Again, the Mayer-Vietoris sequence implies that $\check{M}_{2}(e)$, and therefore $M_{3}(Y)$ is acyclic.

Hence, $(L_3,M_3)$ satisfies all the conditions we want. But, note that $L_3$ is a functor to $\cc(n+7)$ and $M_{3}$ is a functor to $\cc(n+8)$.

In order for $\overline{L}$ and $\overline{M}$ to be functors to the same class $\cc(\overline{n})$, we modify  $L_{3}$ and $M_{3}$ and get $\overline{L}$ and $\overline{M}$ as follows.

For each $Y$ of $\redX$, define $\overline{L}(Y) = L_{3}(Y) \times X_3$.

Consider two inclusions $k_{1} : L_{3}(Y) \to M_{3}(Y)$ and $k_{2} : X_{2} \to X_{3}$. Define $\overline{M}$ to be the quotient of $(M_{3} \times X_{2}) \coprod \overline{L}(Y)$ by the equivalence relation generated by $[(k_{1}(y),x) \sim (y,k_{2}(x)) , \forall x \in X_{2}\,\forall y \in L_{3}(Y)]$.

Then $\overline{L}$ and $\overline{M}$ are functors to the same class $\cc(n+10)$ and satisfy all the conditions in the statement.

\end{enumerate}
\end{proof}

\begin{remark}
In the proof of Theorem \ref{main2}, $n$ gets increased by $10$ as a simplex of positive dimension is attached to $X$. In this way, one can give a very weak upper bound for the minimal dimension $n(X)$ of a duality group obtained from $X$. If $m(X)$ is the number of simplices of positive dimension, $n(X) \leq 10m(X)-7$. Note that $(L_{3},M_{3})$ is enough at the last stage. This upper bound was given in \cite{Hau} and we don't get any improvements in this paper.
\end{remark}

\bibliography{myref}{}
\bibliographystyle{plain}



\noindent 
 


\end{document}